\documentclass[11pt]{article}
\usepackage[a4paper,margin=1in]{geometry}
\usepackage[T1]{fontenc}
\usepackage{lmodern}
\usepackage{amsmath,amssymb,mathtools,amsthm}
\usepackage{graphicx}
\usepackage{listings}
\usepackage{microtype}
\usepackage{cite}
\newtheorem{lemma}{Lemma}
\newtheorem{theorem}{Theorem}

\DeclareMathOperator{\sgn}{sgn}
\newcommand{\ii}{\mathrm i}
\newcommand{\dd}{\,\mathrm d}
\newcommand{\Rea}{\operatorname{Re}}
\newcommand{\Ima}{\operatorname{Im}}
\newcommand{\br}[3]{\left[#1\right]_{#2}^{#3}}
\title{\vspace{-12mm}\textbf{Explicit Cartesian Coordinates for Three-Dimensional Clothoids}}
\author{Alexandru Ionu\textcommabelow{t}}
\date{}

\begin{document}
\maketitle
\vspace{-6mm}
\begin{abstract}
We combine the spherical-clothoid reconstruction theorem of Lucas and Ortega-Yag\"ues with an explicit spinorial parametrization of the spherical clothoid and a Kamp\'e de F\'eriet evaluation of Kummer-product integrals. This gives explicit Cartesian coordinates, in arclength, for every nonhelical three-dimensional clothoid. Kummer asymptotics then yield explicit asymptotic lines, including their gamma--digamma offsets and the angle between their axes.
\end{abstract}
\noindent\textbf{Keywords.} 3D clothoid; spherical clothoid; confluent hypergeometric function; Kamp\'e de F\'eriet function.

\section{Introduction}
A three-dimensional clothoid is a unit-speed space curve whose curvature and torsion are affine functions of arclength,
\begin{equation}
\kappa(s)=cs+d,\qquad \tau(s)=as+b. \tag{1}
\end{equation}
Throughout, \(\kappa(s)=cs+d\) denotes the signed coefficient in the smoothly continued Frenet equations; away from its zeros, the ordinary curvature is \(|cs+d|\).
Berry and Robbins earlier obtained an exact spinorial parametrization for the special case of constant curvature and linearly varying torsion, through its equivalence with the Landau--Zener problem \cite{BerryRobbins1993}.
Frego obtained closed Fresnel formulas for an analytically tractable subfamily and treated the remaining cases numerically \cite{Frego2022}. More recently, Lucas and Ortega-Yag\"ues proved that every nonhelical curve satisfying (1) can be reconstructed from a spherical clothoid, but their formula retains one vector quadrature of the spherical normal \cite[Theorem~4.4]{LucasOrtega2026}.

The Sabban equations admit a two-component spinor formulation \cite{Senyurt2015}, and the spherical clothoid together with its full Sabban frame has an explicit representation in confluent hypergeometric functions \cite{Ionut2022}. Substitution of that frame into the reconstruction theorem turns the formula itself into a single integral of spinor quadratics. The remaining Kummer-product integrals are then evaluated by the Kamp\'e de F\'eriet series studied by Jur\v{s}\.{e}nas \cite{Jursenas2014}.

\section{The spinorial integral form of Theorem 4.4}
Following \cite{LucasOrtega2026}, put
\begin{equation}
\Delta=ad-bc,\qquad \varepsilon=\sgn\Delta,\qquad \rho=\sqrt{a^2+c^2}, \tag{2}
\end{equation}
and assume \(\Delta\ne0\). The constants in their Theorem~4.4 are
\begin{equation}
 p=\frac{\varepsilon a\Delta}{\rho^3},\quad
 q=-\frac{\varepsilon c}{\rho},\quad
 \lambda=\frac{\varepsilon\Delta}{\rho},\quad
 m=\frac{\varepsilon\rho^3}{\Delta^2},\quad
 n=\frac{ab+cd}{\Delta}. \tag{3}
\end{equation}
If \(\delta\) is the unit-speed spherical clothoid with geodesic curvature \(\kappa_g(u)=mu+n\) and intrinsic spherical normal \(N_\delta\), then, up to an orientation-preserving Euclidean congruence,
\begin{equation}
 r(s)=p\bigl(\kappa_g(\lambda s)\delta(\lambda s)+N_\delta(\lambda s)\bigr)
      +q\int_{s_0}^{s}N_\delta(\lambda u)\dd u. \tag{4}
\end{equation}
Introduce
\begin{equation}
 \chi=-\frac{\Delta}{\rho^{3/2}},\qquad
 \xi(s)=\sqrt\rho\left(s+\frac{ab+cd}{\rho^2}\right),\qquad \xi_0=\xi(0), \tag{5}
\end{equation}
so that
\begin{equation}
 \kappa_g(\lambda s)=m\lambda s+n=-\frac{\xi(s)}{\chi}. \tag{6}
\end{equation}
Set
\begin{equation}
 \mu=\frac12+\frac{\ii\chi^2}{8},\qquad \zeta=\frac{\ii\xi^2}{2},\qquad
 U(\xi)={}_1F_1\!\left(\mu;\frac12;\zeta\right),\qquad
 V(\xi)={}_1F_1\!\left(\mu;\frac32;\zeta\right), \tag{7}
\end{equation}
\begin{equation}
 \psi_1(\xi)=e^{-\zeta/2}U(\xi),\qquad
 \psi_2(\xi)=\frac{\ii\chi\xi}{2}e^{-\zeta/2}V(\xi). \tag{8}
\end{equation}
For \(v=(v_1,v_2,v_3)\in\mathbb R^3\), write \(v^\perp:=v_1+\ii v_2\), while \(v_3\) denotes its third component. We use the Sabban ordering
\(\{\gamma,\mathbf t,\mathbf d\}\) and convention \(\mathbf d=\gamma\times\mathbf t\) of
\cite{Senyurt2015}. Equivalently, the spinor representing
\(\{\mathbf t,\mathbf d,\gamma\}\) in the convention of that paper is
\(e^{-\pi\ii/4}(\overline{\psi_1},-\psi_2)^T\). The complete frame is
\begin{align}
 \gamma^\perp&=-2\psi_1\psi_2,& \gamma_3&=|\psi_1|^2-|\psi_2|^2, \tag{9}\\
 \mathbf t^\perp&=\ii(\psi_1^2+\psi_2^2),& (\mathbf t)_3&=2\Ima(\overline{\psi_1}\psi_2), \notag\\
 \mathbf d^\perp&=-(\psi_1^2-\psi_2^2),& (\mathbf d)_3&=-2\Rea(\overline{\psi_1}\psi_2). \tag{10}
\end{align}
Choose the orientation so that the notation of \cite{LucasOrtega2026} is
\begin{equation}
 \delta(\lambda s)=\varepsilon\gamma(\xi(s)),\qquad
 T_\delta(\lambda s)=\mathbf t(\xi(s)),\qquad
 N_\delta(\lambda s)=\varepsilon\mathbf d(\xi(s)). \tag{11}
\end{equation}
In the variable \(\xi\), the pulled-back Sabban equations are
\begin{equation}
 \gamma'=-\chi\mathbf t,\qquad
 \mathbf t'=\chi\gamma+\xi\mathbf d,\qquad
 \mathbf d'=-\xi\mathbf t,
 \qquad\text{hence}\qquad (\xi\gamma-\chi\mathbf d)'=\gamma. \tag{12}
\end{equation}
Substituting (3)--(11) into (4), translating so that \(r(0)=0\), and using (12), gives the direct integral form
\begin{equation}
 \boxed{\displaystyle
 r(s)=\rho^{-3/2}\int_{\xi_0}^{\xi(s)}\bigl(a\,\gamma(\eta)-c\,\mathbf d(\eta)\bigr)\dd\eta.} \tag{13}
\end{equation}
Thus Theorem~4.4 is already an explicit spinorial quadrature. In Cartesian form,
\begin{align}
 x(s)+\ii y(s)
 &=\rho^{-3/2}\int_{\xi_0}^{\xi(s)}
 \Bigl[c(\psi_1^2-\psi_2^2)-2a\psi_1\psi_2\Bigr]\dd\eta, \tag{14}\\
 z(s)
 &=\rho^{-3/2}\int_{\xi_0}^{\xi(s)}
 \Bigl[2c\Rea(\overline{\psi_1}\psi_2)+a(|\psi_1|^2-|\psi_2|^2)\Bigr]\dd\eta, \tag{15}
\end{align}
where the spinors in the integrands are evaluated at \(\eta\). Equation (12) also shows that only the normal-field part requires a new primitive:
\begin{equation}
 r(s)=\rho^{-3/2}\br{cJ_\chi(\xi)+a\bigl(\xi\gamma(\xi)-\chi\mathbf d(\xi)\bigr)}{\xi_0}{\xi(s)},
 \qquad J_\chi'=-\mathbf d. \tag{16}
\end{equation}

\section{Evaluation of the normal-field primitive}
We use the convention
\begin{equation}
F_{1:1;1}^{1:1;1}\!\left[
\begin{matrix}A:B;C\\D:E;F\end{matrix};X,Y\right]
=\sum_{j,k\ge0}\frac{(A)_{j+k}(B)_j(C)_k}{(D)_{j+k}(E)_j(F)_k}\frac{X^jY^k}{j!k!}. \tag{17}
\end{equation}
Define
\begin{align}
\mathcal F_\chi(\xi)
&:=F_{1:1;1}^{1:1;1}\!\left[
\begin{matrix}\frac12:\mu;\frac12-\mu\\[1mm]\frac32:\frac12;\frac12\end{matrix};\zeta,-\zeta\right], \tag{18}\\
\mathcal G_\chi(\xi)
&:=F_{1:1;1}^{1:1;1}\!\left[
\begin{matrix}1:\mu;1-\mu\\[1mm]2:\frac32;\frac12\end{matrix};\zeta,-\zeta\right]. \tag{19}
\end{align}
\begin{lemma}
For real \(\xi\),
\begin{equation}
 (\xi\mathcal F_\chi)'=\psi_1^2,\qquad
 \left(\frac{\ii\chi\xi^2}{4}\mathcal G_\chi\right)'=\overline{\psi_1}\psi_2. \tag{20}
\end{equation}
Consequently,
\begin{equation}
 J_\chi^\perp=2\xi\mathcal F_\chi-\xi e^{-\zeta}UV,
 \qquad
 (J_\chi)_3=\Rea\left(\frac{\ii\chi\xi^2}{2}\mathcal G_\chi\right). \tag{21}
\end{equation}
\end{lemma}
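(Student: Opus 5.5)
The plan is to prove (20) by expanding the spinor products as double power series in $\zeta$ and integrating term by term. Then (21) follows from (20), the frame formulas (9)--(10), and the Sabban equations (12), without any further series work.

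\emph{First identity in (20).} Apply Kummer's transformation $e^{-\zeta}{}_1F_1(\mu;b;\zeta)={}_1F_1(b-\mu;b;-\zeta)$ with $b=\tfrac12$ to one factor of $U$. This gives
\[
\psi_1^2=e^{-\zeta}U^2={}_1F_1\!\left(\mu;\tfrac12;\zeta\right){}_1F_1\!\left(\tfrac12-\mu;\tfrac12;-\zeta\right).
\]
Multiplying the two entire series yields
\[
\sum_{j,k}\frac{(\mu)_j(\tfrac12-\mu)_k}{(\tfrac12)_j(\tfrac12)_k}\frac{\zeta^j(-\zeta)^k}{j!\,k!}.
\]
Each term is a constant times $\xi^{2(j+k)}$, because $\zeta=\ii\xi^2/2$. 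Its primitive vanishing at $0$ is $\xi\cdot\xi^{2n}/(2n+1)$ with $n=j+k$. Since $1/(2n+1)=(\tfrac12)_n/(\tfrac32)_n$, this extra factor is exactly the $(A)_{j+k}/(D)_{j+k}$ part of (18) with $A=\tfrac12$, $D=\tfrac32$. Hence $(\xi\mathcal F_\chi)'=\psi_1^2$. Termwise differentiation is justified because the Kampé de Fériet series (17) here is dominated by a product of two entire series, so it converges locally uniformly.

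\emph{Second identity in (20).} For real $\xi$ and real $\chi$, $\zeta$ is purely imaginary, so $\bar\zeta=-\zeta$ and $\bar\mu=1-\mu$. Therefore
\[
\overline{\psi_1}=e^{\zeta/2}\,{}_1F_1\!\left(1-\mu;\tfrac12;-\zeta\right),
\qquad
\overline{\psi_1}\psi_2=\frac{\ii\chi\xi}{2}\,{}_1F_1\!\left(1-\mu;\tfrac12;-\zeta\right){}_1F_1\!\left(\mu;\tfrac32;\zeta\right).
\]
The exponential factors cancel. Expanding again, each term is a constant times $\xi\cdot\xi^{2n}$. Its primitive is $\xi^{2n+2}/(2n+2)=\tfrac{\xi^2}{2}\,\xi^{2n}\,(1)_n/(2)_n$. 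This reproduces (19) with $A=1$, $D=2$, $E=\tfrac32$ on the $X=\zeta$ slot carrying $\mu$, and $F=\tfrac12$ on the $Y=-\zeta$ slot carrying $1-\mu$. The prefactor $\tfrac{\ii\chi\xi}{2}\cdot\tfrac{\xi}{2}=\tfrac{\ii\chi\xi^2}{4}$ matches (20).

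\emph{Deducing (21).} Since $J_\chi'=-\mathbf d$, (10) gives $(J_\chi^\perp)'=\psi_1^2-\psi_2^2$ and $(J_\chi)_3'=2\Rea(\overline{\psi_1}\psi_2)$.

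For the perpendicular component, avoid a separate series for $\psi_2^2$. By (8)--(9),
\[
\gamma^\perp=-2\psi_1\psi_2=-\ii\chi\xi e^{-\zeta}UV,
\qquad\text{so}\qquad
\xi e^{-\zeta}UV=\ii\gamma^\perp/\chi .
\]
By (12) and (10), $(\ii\gamma^\perp/\chi)'=-\ii\mathbf t^\perp=\psi_1^2+\psi_2^2$. Hence
\[
(2\xi\mathcal F_\chi-\xi e^{-\zeta}UV)'=2\psi_1^2-(\psi_1^2+\psi_2^2)=\psi_1^2-\psi_2^2 .
\]
For the third component, take real parts in the second identity of (20), multiplied by $2$. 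Since $J_\chi$ enters (16) only through a bracket $[\,\cdot\,]_{\xi_0}^{\xi(s)}$, additive constants are irrelevant.

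The main point needing care is the bookkeeping in the second product. The conjugation $\bar\mu=1-\mu$ holds only because $\chi\in\mathbb R$, and one must check that the parameters land in the correct slots of (19). A further check is that the $(\tfrac12)_n/(\tfrac32)_n$ and $(1)_n/(2)_n$ factors arise exactly from integrating powers of $\xi$, and not from some other normalisation.
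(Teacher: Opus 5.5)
Your proposal is correct and follows the paper's proof essentially step for step. It uses Kummer's transformation for $\psi_1^2$, conjugation with $\bar\mu=1-\mu$ and $\bar\zeta=-\zeta$ for $\overline{\psi_1}\psi_2$, termwise integration via $(\tfrac12)_N/(\tfrac32)_N=1/(2N+1)$ and $(1)_N/(2)_N=1/(N+1)$, and the identity $(\psi_1\psi_2)'=\frac{\ii\chi}{2}(\psi_1^2+\psi_2^2)$ to avoid a separate primitive for $\psi_2^2$. The only differences are cosmetic: you read that identity off the Sabban equation $\gamma'=-\chi\mathbf t$ in (12) instead of the spinor system (34), and you add a valid domination argument justifying termwise differentiation.
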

\begin{proof}
The identities in (20) are the relevant specializations of the Kummer-product integral in \cite{Jursenas2014}; they also follow by multiplying the two \({}_1F_1\) series and integrating termwise. For \(N=j+k\),
\[
 \frac{(1/2)_N}{(3/2)_N}=\frac1{2N+1},\qquad
 \frac{(1)_N}{(2)_N}=\frac1{N+1},
\]
which cancel the derivatives of \(\xi^{2N+1}\) and \(\xi^{2N+2}/2\). Kummer's transformation, together with conjugation for real \(\xi\), gives the opposite-argument factors in (20). Finally,
\[
 (\psi_1\psi_2)'=\frac{\ii\chi}{2}(\psi_1^2+\psi_2^2)
\]
eliminates the third apparent primitive and yields (21).
\end{proof}

\section{Explicit Cartesian coordinates}
\begin{theorem}
Let (1) hold with \(a^2+c^2>0\) and \(\Delta\ne0\). With the notation (2), (5), (7), (18), and (19), the corresponding three-dimensional clothoid is, up to an orientation-preserving rigid motion,
\begin{align}
 x(s)+\ii y(s)
 &=\rho^{-3/2}\br{
 2c\xi\mathcal F_\chi+e^{-\zeta}\left\{
 a\chi U^2+\frac{a\chi^3\xi^2}{4}V^2-(c\xi+\ii a\chi\xi^2)UV
 \right\}}{\xi_0}{\xi(s)}, \tag{22}\\[1mm]
 z(s)
 &=\rho^{-3/2}\br{
 \Rea\left(\frac{\ii c\chi\xi^2}{2}\mathcal G_\chi\right)
 +a\xi\left\{2|U|^2-1-\chi^2\Ima(\overline{U}V)\right\}}{\xi_0}{\xi(s)}. \tag{23}
\end{align}
Every function in the brackets is evaluated at \(\xi\), and \(\br{f}{\xi_0}{\xi(s)}=f(\xi(s))-f(\xi_0)\).
\end{theorem}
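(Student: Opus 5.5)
The plan is to start from the primitive form (16), $r=\rho^{-3/2}\bigl[cJ_\chi+a(\xi\gamma-\chi\mathbf d)\bigr]_{\xi_0}^{\xi(s)}$, which already holds up to a rigid motion. We then substitute the Lemma's formula (21) for $J_\chi$ and the spinor expressions (9)--(10) for $\gamma$ and $\mathbf d$. Since the Lemma supplies the $c$-terms verbatim, namely $2c\xi\mathcal F_\chi$ (plus a $UV$ piece) in the horizontal part and $\Rea(\tfrac{\ii c\chi\xi^2}{2}\mathcal G_\chi)$ in $z$, the whole proof reduces to rewriting the elementary combination $a(\xi\gamma-\chi\mathbf d)$ in terms of $U,V$. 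Nothing beyond algebra with (8) is needed.

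For the horizontal component, (9)--(10) give
\[
\xi\gamma^\perp-\chi\mathbf d^\perp=-2\xi\psi_1\psi_2+\chi(\psi_1^2-\psi_2^2).
\]
Inserting $\psi_1=e^{-\zeta/2}U$ and $\psi_2=\tfrac{\ii\chi\xi}{2}e^{-\zeta/2}V$ yields
\[
e^{-\zeta}\Bigl\{\chi U^2+\tfrac{\chi^3\xi^2}{4}V^2-\ii\chi\xi^2UV\Bigr\},
\]
because $\psi_2^2=-\tfrac{\chi^2\xi^2}{4}e^{-\zeta}V^2$ and $-2\xi\psi_1\psi_2=-\ii\chi\xi^2e^{-\zeta}UV$. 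Adding $c$ times $J_\chi^\perp=2\xi\mathcal F_\chi-\xi e^{-\zeta}UV$ reproduces the brace in (22), including the combined term $-(c\xi+\ii a\chi\xi^2)UV$.

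For the third component we need
\[
\xi\gamma_3-\chi\mathbf d_3=\xi(|\psi_1|^2-|\psi_2|^2)+2\chi\Rea(\overline{\psi_1}\psi_2).
\]
For real $\xi$, $\zeta$ is purely imaginary, so $|e^{-\zeta/2}|=1$. This gives
\[
|\psi_1|^2=|U|^2,\qquad |\psi_2|^2=\tfrac{\chi^2\xi^2}{4}|V|^2,\qquad 2\chi\Rea(\overline{\psi_1}\psi_2)=-\chi^2\xi\,\Ima(\overline UV).
\]
The result is $\xi\bigl\{|U|^2-\tfrac{\chi^2\xi^2}{4}|V|^2-\chi^2\Ima(\overline UV)\bigr\}$. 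Comparing with (23), the remaining task is to show
\[
|U|^2-\tfrac{\chi^2\xi^2}{4}|V|^2=2|U|^2-1,\qquad\text{i.e.}\qquad |\psi_1|^2+|\psi_2|^2=1.
\]
This is the spinor normalization, and it is exactly what makes (9)--(10) an orthonormal frame: $|\gamma|^2=(|\psi_1|^2+|\psi_2|^2)^2=1$. It follows from (12), which preserves $|\psi_1|^2+|\psi_2|^2$ (the underlying spinor system is unitary), together with the initial values $\psi_1(0)=1$, $\psi_2(0)=0$.

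The main obstacle is bookkeeping rather than analysis. We must check the signs and conjugations in (21), in particular that $(J_\chi)_3=-\int\mathbf d_3=2\Rea\int\overline{\psi_1}\psi_2$ matches $\Rea(\tfrac{\ii\chi\xi^2}{2}\mathcal G_\chi)$ with the factor $2$ from (20), and that the same orientation choice (11) is used throughout. I would verify these by differentiating the claimed brackets directly against the integrands of (14)--(15), using $(\psi_1\psi_2)'=\tfrac{\ii\chi}{2}(\psi_1^2+\psi_2^2)$ and the norm identity. Finally, the orientation-preserving rigid motion inherited from (4), together with the translation normalizing $r(0)=0$, is absorbed into the bracket convention $[f]_{\xi_0}^{\xi(s)}$.
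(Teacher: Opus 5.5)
Your proposal is correct and is essentially the paper's own proof. Both insert (9)--(10) and (21) into (16), and both handle the third coordinate with $|\psi_1|^2+|\psi_2|^2=1$ and $2\Rea(\overline{\psi_1}\psi_2)=-\chi\xi\Ima(\overline{U}V)$; your algebra for both components checks out. One small point: norm conservation is most directly read off from the Hermitian spinor system (34) with $\psi_1(0)=1$, $\psi_2(0)=0$, although your route via (12) together with $|\gamma|=|\psi_1|^2+|\psi_2|^2$ is also valid.
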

\begin{proof}
Insert (9)--(10) and (21) into (16). For the third coordinate use \(|\psi_1|^2+|\psi_2|^2=1\) and
\(2\Rea(\overline{\psi_1}\psi_2)=-\chi\xi\Ima(\overline{U}V)\).
\end{proof}
The endpoint subtraction gives \(r(0)=0\); arbitrary initial position and orientation are obtained by a single rigid motion.

\section{Explicit asymptotic lines}
The integral representation (14)--(16), rather than the expanded Kamp\'e de F\'eriet expressions (22)--(23), is the convenient starting point for asymptotics. Define only the phase
\begin{equation}
 \Phi(\xi)=\frac{\xi^2}{4}+\frac{\chi^2}{4}\log|\xi|, \tag{24}
\end{equation}
and the two Kummer-branch coefficients
\begin{equation}
 A_\chi=
 \frac{\sqrt\pi\,e^{-\pi\chi^2/16}2^{-\ii\chi^2/8}}
 {\Gamma(\frac12+\frac{\ii\chi^2}{8})},\qquad
 B_\chi=
 \frac{\chi\sqrt\pi}{2\sqrt2}\,
 \frac{e^{-\pi\chi^2/16}2^{\ii\chi^2/8}e^{3\pi\ii/4}}
 {\Gamma(1-\frac{\ii\chi^2}{8})}. \tag{25}
\end{equation}

\begin{lemma}[Spinor asymptotics]
As \(\xi\to\pm\infty\), the spinors possess the completely explicit Poincar\'e expansions
\begin{align}
 \psi_1(\xi)\sim{}&A_\chi e^{\ii\Phi(\xi)}
 \sum_{n=0}^{\infty}
 \frac{(-2\ii)^n
 (\frac12-\frac{\ii\chi^2}{8})_n
 (-\frac{\ii\chi^2}{8})_n}
 {n!\,|\xi|^{2n}} \notag\\
 &-\frac{\chi B_\chi}{2|\xi|}e^{-\ii\Phi(\xi)}
 \sum_{n=0}^{\infty}
 \frac{(2\ii)^n
 (\frac12+\frac{\ii\chi^2}{8})_n
 (1+\frac{\ii\chi^2}{8})_n}
 {n!\,|\xi|^{2n}}, \tag{26}\\
 \psi_2(\xi)\sim{}&\ \pm B_\chi e^{-\ii\Phi(\xi)}
 \sum_{n=0}^{\infty}
 \frac{(2\ii)^n
 (\frac12+\frac{\ii\chi^2}{8})_n
 (\frac{\ii\chi^2}{8})_n}
 {n!\,|\xi|^{2n}} \notag\\
 &\pm\frac{\chi A_\chi}{2|\xi|}e^{\ii\Phi(\xi)}
 \sum_{n=0}^{\infty}
 \frac{(-2\ii)^n
 (\frac12-\frac{\ii\chi^2}{8})_n
 (1-\frac{\ii\chi^2}{8})_n}
 {n!\,|\xi|^{2n}}. \tag{27}
\end{align}
In particular,
\begin{align}
 \psi_1(\xi)&=A_\chi e^{\ii\Phi(\xi)}
 -\frac{\chi B_\chi}{2|\xi|}e^{-\ii\Phi(\xi)}
 +O(|\xi|^{-2}), \notag\\
 \psi_2(\xi)&=\pm B_\chi e^{-\ii\Phi(\xi)}
 \pm\frac{\chi A_\chi}{2|\xi|}e^{\ii\Phi(\xi)}
 +O(|\xi|^{-2}). \tag{28}
\end{align}
\end{lemma}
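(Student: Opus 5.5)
The plan is to apply the standard large-argument expansion of Kummer's function along the imaginary axis. For \(\zeta=\ii\xi^2/2\) we have \(\arg\zeta=\pi/2\), which lies strictly inside the sector where
\[
{}_1F_1(\alpha;\beta;\zeta)\sim\frac{\Gamma(\beta)}{\Gamma(\beta-\alpha)}e^{\pm\pi\ii\alpha}\zeta^{-\alpha}\sum_n\frac{(\alpha)_n(\alpha-\beta+1)_n}{n!}(-\zeta)^{-n}
+\frac{\Gamma(\beta)}{\Gamma(\alpha)}e^{\zeta}\zeta^{\alpha-\beta}\sum_n\frac{(\beta-\alpha)_n(1-\alpha)_n}{n!}\zeta^{-n}
\]
holds with the upper sign. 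Because \(\zeta\) depends only on \(\xi^2\), the functions \(U\) and \(V\) are even in \(\xi\). Hence \(\psi_1\) is even and \(\psi_2\) is odd, since it carries the prefactor \(\xi\). This parity is the source of the \(\pm\) signs in (27)–(28). It therefore suffices to treat \(\xi\to+\infty\) and replace \(\xi\) by \(|\xi|\).

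First, for \(\psi_1\), I will take \(\alpha=\mu=\tfrac12+\ii\chi^2/8\) and \(\beta=\tfrac12\), so that \(\beta-\alpha=-\ii\chi^2/8\) and \(\alpha-\beta+1=1+\ii\chi^2/8\). Multiplying by \(e^{-\zeta/2}\) turns \(e^{\zeta}\) into \(e^{\zeta/2}=e^{\ii\xi^2/4}\) and leaves \(e^{-\zeta/2}=e^{-\ii\xi^2/4}\) on the recessive term. The powers \(\zeta^{\pm \ii\chi^2/8}\) then produce \(|\xi|^{\pm\ii\chi^2/4}=e^{\pm\ii(\chi^2/4)\log|\xi|}\), which assembles the phase \(\Phi\) of (24). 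The constants \(2^{\mp\ii\chi^2/8}\) and \(\ii^{\pm\ii\chi^2/8}=e^{\mp\pi\chi^2/16}\) combine with \(\Gamma(1/2)=\sqrt\pi\) to give \(A_\chi\). The remaining coefficient has the form \(\sqrt\pi\,\zeta^{-\mu}/\Gamma(-\ii\chi^2/8)\). I will rewrite it using \(1/\Gamma(-w)=-w/\Gamma(1-w)\) with \(w=\ii\chi^2/8\), together with \(\zeta^{-1/2}=\sqrt2\,e^{-\pi\ii/4}/|\xi|\). This produces exactly \(-\chi B_\chi/(2|\xi|)\), after confirming that \(e^{\pi\ii\mu}\) supplies the factor \(e^{3\pi\ii/4}\) and the surplus factor \(\chi\). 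The series variables are \(\zeta^{-n}=(-2\ii)^n|\xi|^{-2n}\) and \((-\zeta)^{-n}=(2\ii)^n|\xi|^{-2n}\), which reproduce the Pochhammer pairs in (26).

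Second, for \(\psi_2\), I will take \(\beta=\tfrac32\), so that \(\beta-\alpha=1-\ii\chi^2/8\) and \(\alpha-\beta+1=\ii\chi^2/8\), and include the prefactor \(\ii\chi\xi/2\) and \(\Gamma(3/2)=\sqrt\pi/2\). On the recessive branch, \(\xi\zeta^{-\mu}\) is of order \(|\xi|^0\). Its coefficient \(\ii\chi\sqrt\pi/(4\Gamma(1-\ii\chi^2/8))\), times the phase constants, should reduce to \(B_\chi\). On the dominant branch, \(\xi\zeta^{\mu-3/2}\) is of order \(|\xi|^{-1}\). Here \(1/\Gamma(\mu)\) reappears, so the coefficient becomes \(\chi A_\chi/(2|\xi|)\). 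The Pochhammer symbols \((1-\alpha)_n=(\tfrac12-\ii\chi^2/8)_n\) and \((\beta-\alpha)_n=(1-\ii\chi^2/8)_n\) then match (27).

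Third, (28) follows by truncating at \(n=0\). The first omitted terms are \(O(|\xi|^{-2})\) on the leading branch and \(O(|\xi|^{-3})\) on the subleading one. As a consistency check, I will verify that \(|A_\chi|^2+|B_\chi|^2=1\). Using \(|\Gamma(\tfrac12+\ii y)|^2=\pi/\cosh\pi y\) and \(|\Gamma(1-\ii y)|^2=\pi y/\sinh\pi y\) with \(y=\chi^2/8\), this is the limit of \(|\psi_1|^2+|\psi_2|^2=1\), and it will catch constant-factor errors. I expect the main obstacle to be the bookkeeping of branches and phases. This covers the choice of \(e^{+\pi\ii\alpha}\) for \(\arg\zeta=\pi/2\), the principal value of \(\zeta^{-\mu}\) and \(\zeta^{\mu-\beta}\), and the resulting factors \(e^{\pm\pi\chi^2/16}\), \(2^{\pm\ii\chi^2/8}\) and \(e^{3\pi\ii/4}\). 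I will handle these by writing \(\zeta=(\xi^2/2)e^{\ii\pi/2}\) explicitly and tracking each complex power separately.
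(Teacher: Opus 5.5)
Your proposal is correct and is essentially the paper's proof: both apply the two-branch expansion DLMF~13.7.2, with the upper sign and the principal branch of $\zeta=\ii\xi^2/2$, to $U$ and $V$. Your constant tracking (including $1/\Gamma(-\ii\chi^2/8)=-(\ii\chi^2/8)/\Gamma(1-\ii\chi^2/8)$ turning the algebraic branch of $\psi_1$ into $-\chi B_\chi/(2|\xi|)$) reproduces $A_\chi$, $B_\chi$ and both series exactly, and your parity remark ($\psi_1$ even, $\psi_2$ odd) makes explicit the origin of the $\pm$ signs that the paper leaves implicit.
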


\begin{proof}
Apply the standard two-branch expansion of \({}_1F_1\) \cite[Eq.~13.7.2]{DLMF} to (7)--(8), with the principal logarithm of \(\zeta=\ii\xi^2/2\). The exponential and algebraic branches give (25), and the inverse-power factors give the two displayed series.
\end{proof}

The gamma-modulus identities imply
\begin{equation}
 |A_\chi|^2=\frac{1+e^{-\pi\chi^2/4}}2,
 \qquad
 |B_\chi|^2=\frac{1-e^{-\pi\chi^2/4}}2. \tag{29}
\end{equation}
Define the two limiting directions directly by
\begin{equation}
 (u_\pm)^\perp=
 \pm\frac{\chi\pi e^{-\pi\chi^2/8}e^{-\pi\ii/4}}
 {\sqrt2\,\Gamma(\frac12+\frac{\ii\chi^2}{8})
 \Gamma(1-\frac{\ii\chi^2}{8})},
 \qquad
 (u_\pm)_3=e^{-\pi\chi^2/4}. \tag{30}
\end{equation}
The same gamma identities show that \(|u_\pm|=1\). Substitution of (28) into (9)--(10) gives, as \(\xi\to\pm\infty\),
\begin{align}
 \gamma^\perp(\xi)={}&(u_\pm)^\perp
 -\frac{\chi}{\xi}\left(
 A_\chi^2e^{2\ii\Phi(\xi)}
 -B_\chi^2e^{-2\ii\Phi(\xi)}\right)
 +O(|\xi|^{-2}), \notag\\
 \gamma_3(\xi)={}&(u_\pm)_3
 -\frac{2\chi}{|\xi|}\Rea\!\left(
 \overline{A_\chi}B_\chi e^{-2\ii\Phi(\xi)}\right)
 +O(|\xi|^{-2}), \tag{31}\\
 \mathbf d^\perp(\xi)={}&-A_\chi^2e^{2\ii\Phi(\xi)}
 +B_\chi^2e^{-2\ii\Phi(\xi)}
 -\frac{\chi}{\xi}(u_\pm)^\perp+O(|\xi|^{-2}), \notag\\
 (\mathbf d)_3(\xi)={}&\mp2\Rea\!\left(
 \overline{A_\chi}B_\chi e^{-2\ii\Phi(\xi)}\right)
 -\frac{\chi}{\xi}(u_\pm)_3+O(|\xi|^{-2}). \tag{32}
\end{align}
In particular,
\begin{equation}
 \gamma(\xi)\longrightarrow u_\pm,
 \qquad
 \xi\gamma(\xi)-\chi\mathbf d(\xi)=\xi u_\pm+O(|\xi|^{-1}). \tag{33}
\end{equation}

It remains to determine the finite part of \(J_\chi\). Let \(\partial_\chi\) denote differentiation at fixed \(\xi\), and let \(\Psi=\Gamma'/\Gamma\). The Kummer pair (8) satisfies
\begin{equation}
 \psi_1'=\frac{\ii\xi}{2}\psi_1+\frac{\ii\chi}{2}\psi_2,
 \qquad
 \psi_2'=\frac{\ii\chi}{2}\psi_1-\frac{\ii\xi}{2}\psi_2. \tag{34}
\end{equation}
Differentiating (34) with respect to \(\chi\) at fixed \(\xi\), the diagonal terms cancel and give the exact primitive identities
\begin{align}
 J_\chi^\perp&=-2\ii\left(
 \psi_1\partial_\chi\psi_2-
 \psi_2\partial_\chi\psi_1\right), \tag{35}\\
 (J_\chi)_3&=-2\ii\left(
 \overline{\psi_1}\partial_\chi\psi_1+
 \overline{\psi_2}\partial_\chi\psi_2\right), \tag{36}
\end{align}
with \(J_\chi(0)=0\). The logarithmic derivatives of the explicit coefficients (25) are
\begin{align}
 \frac{A_\chi'}{A_\chi}
 &=\frac{\chi}{4}\left[-\frac\pi2-\ii\log2
 -\ii\Psi\!\left(\frac12+\frac{\ii\chi^2}{8}\right)\right], \tag{37}\\
 \frac{B_\chi'}{B_\chi}
 &=\frac1\chi+\frac{\chi}{4}\left[-\frac\pi2+\ii\log2
 +\ii\Psi\!\left(1-\frac{\ii\chi^2}{8}\right)\right]. \tag{38}
\end{align}
Since \(J_\chi'=-\mathbf d\), (32) shows that the nonoscillatory term \(\chi u_\pm/\xi\) produces the logarithm, while \(2\Phi'(\xi)=\xi+\chi^2/(2\xi)\) makes the oscillatory tails \(O(|\xi|^{-1})\) after one integration by parts. Substitution of (28) into (35)--(38) then yields
\begin{equation}
 J_\chi(\xi)=\chi u_\pm\log|\xi|+C_\pm+O(|\xi|^{-1}),
 \qquad \xi\to\pm\infty, \tag{39}
\end{equation}
where the finite parts are entirely explicit:
\begin{align}
 (C_\pm)^\perp={}&\pm
 \frac{\chi\pi e^{-\pi\chi^2/8}e^{-\pi\ii/4}}
 {\sqrt2\,\Gamma(\frac12+\frac{\ii\chi^2}{8})
 \Gamma(1-\frac{\ii\chi^2}{8})} \notag\\
 &\times\left[
 \frac{\ii}{\chi}-\frac{\chi}{4}\left\{
 2\log2+\Psi\!\left(1-\frac{\ii\chi^2}{8}\right)
 +\Psi\!\left(\frac12+\frac{\ii\chi^2}{8}\right)
 \right\}\right], \tag{40}\\
 (C_\pm)_3={}&\frac{\chi}{4}\Bigg[
 \left(1-e^{-\pi\chi^2/4}\right)
 \left\{\log2+\Rea\Psi\!\left(1-\frac{\ii\chi^2}{8}\right)\right\} \notag\\
 &\hspace{18mm}-\left(1+e^{-\pi\chi^2/4}\right)
 \left\{\log2+\Rea\Psi\!\left(\frac12+\frac{\ii\chi^2}{8}\right)\right\}
 \Bigg]. \tag{41}
\end{align}
Thus \((C_-)^\perp=-(C_+)^\perp\) and \((C_-)_3=(C_+)_3\).

\begin{theorem}[Explicit asymptotic lines]
Let the hypotheses of Theorem~1 hold, and set
\begin{equation}
 b_\pm=\rho^{-3/2}\left\{
 cC_\pm-cJ_\chi(\xi_0)
 -a\bigl(\xi_0\gamma(\xi_0)-\chi\mathbf d(\xi_0)\bigr)
 \right\}. \tag{42}
\end{equation}
Then, as \(s\to\pm\infty\),
\begin{align}
 r(s)={}&b_\pm+u_\pm\Bigg[
 \frac{a}{\rho}\left(s+\frac{ab+cd}{\rho^2}\right) \notag\\
 &\hspace{16mm}-\frac{c\Delta}{\rho^3}
 \log\left|\sqrt\rho\left(s+\frac{ab+cd}{\rho^2}\right)\right|
 \Bigg]+O(|s|^{-1}). \tag{43}
\end{align}
Consequently,
\begin{equation}
 \operatorname{dist}\bigl(r(s),\,b_\pm+\mathbb R u_\pm\bigr)
 =O(|s|^{-1}). \tag{44}
\end{equation}
The oriented angle \(\Theta\in[0,\pi]\) between the two asymptotic directions is
\begin{equation}
 \cos\Theta=2e^{-\pi\chi^2/2}-1,
 \qquad
 \Theta=2\arccos\!\left(e^{-\pi\chi^2/4}\right). \tag{45}
\end{equation}
\end{theorem}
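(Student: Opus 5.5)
The plan is to start from the primitive form (16),
\[
r(s)=\rho^{-3/2}\br{cJ_\chi(\xi)+a\bigl(\xi\gamma(\xi)-\chi\mathbf d(\xi)\bigr)}{\xi_0}{\xi(s)},
\]
rather than from (22)--(23). Since \(\rho>0\), \(\xi(s)=\sqrt\rho\,(s+(ab+cd)/\rho^2)\) is an increasing affine function of \(s\). Hence \(s\to\pm\infty\) corresponds to \(\xi\to\pm\infty\), and \(O(|\xi|^{-1})=O(|s|^{-1})\).

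\emph{Upper endpoint.} Insert (33) and (39):
\[
cJ_\chi(\xi)+a(\xi\gamma-\chi\mathbf d)=cC_\pm+u_\pm\bigl(a\xi+c\chi\log|\xi|\bigr)+O(|\xi|^{-1}).
\]
Multiply by \(\rho^{-3/2}\) and use \(\rho^{-3/2}a\xi=\frac a\rho\bigl(s+\frac{ab+cd}{\rho^2}\bigr)\) and \(\rho^{-3/2}c\chi=-c\Delta/\rho^3\) from (5). This reproduces the bracket in (43).

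\emph{Lower endpoint.} Subtracting the value at \(\xi_0\) contributes exactly the remaining terms of \(b_\pm\) in (42). This proves (43).

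\emph{Distance to the line.} The bracket in (43) is a real scalar multiple of \(u_\pm\). So \(r(s)-b_\pm\) lies within \(O(|s|^{-1})\) of \(\mathbb R u_\pm\), which is (44).

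\emph{Angle.} Use (30) and the fact that \((u_-)^\perp=-(u_+)^\perp\) and \((u_-)_3=(u_+)_3\):
\[
u_+\cdot u_-=-|u_+^\perp|^2+(u_+)_3^2 .
\]
Since \(|u_\pm|=1\), this equals \(2(u_+)_3^2-1=2e^{-\pi\chi^2/2}-1\). Then \(\cos\Theta=2\cos^2(\Theta/2)-1\) gives \(\cos(\Theta/2)=e^{-\pi\chi^2/4}\in(0,1]\), hence \(\Theta=2\arccos(e^{-\pi\chi^2/4})\in[0,\pi]\).

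For completeness I would also verify \(|u_\pm|=1\). By \(|\Gamma(\tfrac12+\ii y)|^2=\pi/\cosh\pi y\) and \(|\Gamma(1+\ii y)|^2=\pi y/\sinh\pi y\) with \(y=\chi^2/8\),
\[
|u^\perp|^2=\chi^2\pi^2e^{-\pi\chi^2/4}\cdot\frac{\cosh\pi y\,\sinh\pi y}{2\pi^2 y}=1-e^{-\pi\chi^2/2}.
\]

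\emph{Main obstacle.} The real weight lies in (33) and (39), which are stated before the theorem without full proof. These are the steps I would check most carefully.

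For (33), the \(1/\xi\) terms of \(\xi\gamma\) and \(\chi\mathbf d\) must cancel against each other, except for oscillatory pieces. From (31)--(32) the \(O(1)\) oscillatory parts are \(-\chi(A^2e^{2\ii\Phi}-B^2e^{-2\ii\Phi})\) and \(\chi(-A^2e^{2\ii\Phi}+B^2e^{-2\ii\Phi})\). These do cancel, leaving \(\xi u_\pm+O(|\xi|^{-1})\).

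For (39), the plan is as follows:
\begin{enumerate}
\item Integrate \(-\mathbf d\) using (32). The \(-\chi u_\pm/\xi\) term integrates to \(\chi u_\pm\log|\xi|\).
\item The oscillatory terms \(e^{\pm2\ii\Phi}\) have derivative phase \(2\Phi'\sim\xi\). One integration by parts therefore leaves \(O(|\xi|^{-1})\).
\item The remaining constant \(C_\pm\) is obtained from the exact primitive (35)--(36). Substitute (28), where \(\chi\)-derivatives of the phases produce the \(\log|\xi|\) and the coefficient derivatives (37)--(38) produce the gamma--digamma constants.
\end{enumerate}
The one delicate point in step 3 is justifying that the asymptotic expansion may be differentiated in \(\chi\). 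I would argue this from uniformity of DLMF 13.7.2 in the parameter on compact sets.
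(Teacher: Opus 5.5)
Your proposal is correct and follows the paper's own proof essentially line by line. You substitute (33) and (39) into (16), convert $\xi$ and $\chi$ via (5) to get (43) with basepoint (42), note that the logarithmic term is a real multiple of $u_\pm$ to get (44), and obtain (45) from $u_+\cdot u_-=2(u_+)_3^2-1$ using (30). Your extra checks, namely $|u_\pm|=1$ via the gamma-modulus identities and the cancellation of the $O(1)$ oscillatory terms behind (33), are correct details that the paper leaves implicit.
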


\begin{proof}
Insert (33) and (39) into (16), use
\(\xi(s)=\sqrt\rho\bigl(s+(ab+cd)/\rho^2\bigr)\), and recall
\(\chi=-\Delta/\rho^{3/2}\). This gives (43) and the explicit basepoints (42). The logarithmic displacement is parallel to \(u_\pm\), so it does not affect the distance to the corresponding line, proving (44). Finally, (30) and the gamma-modulus identities give
\(u_+\cdot u_-=2e^{-\pi\chi^2/2}-1\), hence (45).
\end{proof}

\paragraph{Geometric interpretation.}
If \(c=0\), the logarithmic term vanishes and (43) is an ordinary affine asymptotic parametrization. If \(a=0\), the longitudinal escape along each asymptotic line is logarithmic. In every nonhelical case \(\Delta\ne0\), the transverse distance to the appropriate line decays as \(O(|s|^{-1})\). The acute angle between the unoriented lines is
\(\arccos|2e^{-\pi\chi^2/2}-1|\).

\paragraph{Numerical illustration.}
Figure~\ref{fig:explicit-example} revisits the example \(\kappa(s)=s+1\), \(\tau(s)=s/10\) used in \cite[Figure~1]{LucasOrtega2026}. Its Cartesian coordinates are evaluated directly from (22)--(23), rather than by numerical integration of the Frenet--Serret equations. The routine \texttt{mpmath.hyper2d} is especially convenient because its parameter dictionaries transcribe (18)--(19) directly; the asymptotic lines are evaluated independently from (30) and (40)--(42).

\begin{figure}[htbp]
\centering
\includegraphics[width=0.78\textwidth]{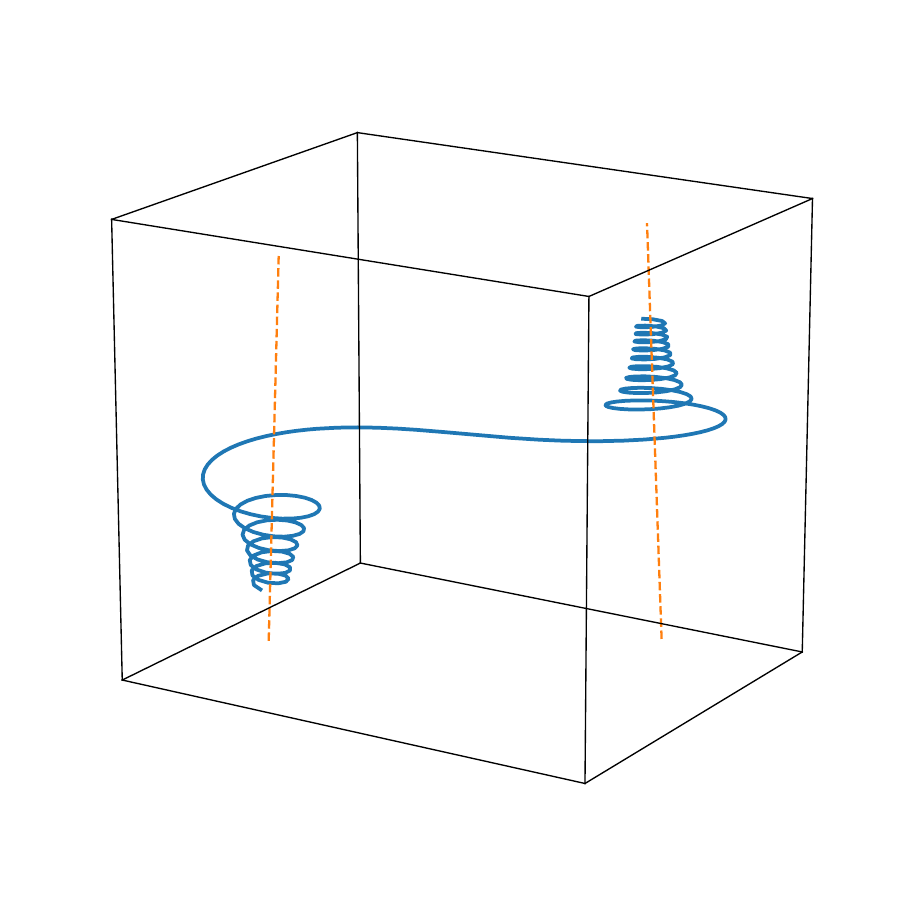}
\caption{The three-dimensional clothoid \(\kappa(s)=s+1\), \(\tau(s)=s/10\), \(-10\le s\le10\), together with its explicit asymptotic lines \(b_-+\mathbb R u_-\) and \(b_++\mathbb R u_+\), shown dashed.}
\label{fig:explicit-example}
\end{figure}

The excluded cases are already explicit: if \(a=c=0\), one obtains the elementary constant-curvature, constant-torsion helix; if \(\Delta=0\) and \((a,c)\ne(0,0)\), then \(\tau/\kappa\) is constant and the commuting clothoid-helix family reduces to Fresnel integrals \cite{Frego2022,Rosu2026}. Thus Theorems~1 and 2 complete both the closed-form Cartesian and asymptotic treatment of affine curvature and torsion.

\clearpage
\appendix
\section{Reproducible evaluation}
The following is the exact Python program used to generate Figure~\ref{fig:explicit-example}.
\begin{lstlisting}[language=Python]
#!/usr/bin/env python3
"""Reproduce Figure 1 directly from the formulas of the paper.

The curve is evaluated from equations (22)--(23), with the Kampe de Feriet
functions (18)--(19) evaluated by mpmath.hyper2d.  The asymptotic lines are
evaluated independently from (30) and (40)--(42).  No Frenet--Serret ODE is
integrated numerically.

Requires Python 3 with mpmath, numpy, and matplotlib.
"""

from pathlib import Path
import multiprocessing

import matplotlib.pyplot as plt
import mpmath as mp
import numpy as np


# Working precision is set before any mpmath constants are constructed.
mp.mp.dps = 40
ii = mp.j

# Numerical illustration: kappa(s)=c s+d=s+1, tau(s)=a s+b=s/10.
a, b, c, d = map(mp.mpf, ("0.1", "0", "1", "1"))
s_values = np.linspace(-10.0, 10.0, 301)

# Equations (2), (5), and (7).
Delta = a*d - b*c
rho = mp.sqrt(a*a + c*c)
chi = -Delta / rho**mp.mpf("1.5")
xi0 = mp.sqrt(rho) * (a*b + c*d) / rho**2
mu = mp.mpf("0.5") + ii*chi**2/8


def xi(s):
    """Equation (5)."""
    return mp.sqrt(rho) * (s + (a*b + c*d)/rho**2)


def zeta(xi_value):
    """zeta=i xi^2/2 in equation (7)."""
    return ii*xi_value**2/2


def U(xi_value):
    """U(xi) in equation (7)."""
    return mp.hyp1f1(mu, mp.mpf("0.5"), zeta(xi_value))


def V(xi_value):
    """V(xi) in equation (7)."""
    return mp.hyp1f1(mu, mp.mpf("1.5"), zeta(xi_value))


def psi1(xi_value):
    """psi_1(xi) in equation (8)."""
    return mp.exp(-zeta(xi_value)/2) * U(xi_value)


def psi2(xi_value):
    """psi_2(xi) in equation (8)."""
    return ii*chi*xi_value*mp.exp(-zeta(xi_value)/2) * V(xi_value)/2


def sabban_frame(xi_value):
    """The complete Sabban frame {gamma,t,d} in equations (9)--(10)."""
    p1 = psi1(xi_value)
    p2 = psi2(xi_value)

    gamma_perp = -2*p1*p2
    t_perp = ii*(p1**2 + p2**2)
    d_perp = -(p1**2 - p2**2)

    # Dictionary keys are the paper's frame symbols gamma, t, d.
    return {
        "gamma": mp.matrix([
            mp.re(gamma_perp),
            mp.im(gamma_perp),
            abs(p1)**2 - abs(p2)**2,
        ]),
        "t": mp.matrix([
            mp.re(t_perp),
            mp.im(t_perp),
            2*mp.im(mp.conj(p1)*p2),
        ]),
        "d": mp.matrix([
            mp.re(d_perp),
            mp.im(d_perp),
            -2*mp.re(mp.conj(p1)*p2),
        ]),
    }


def F_chi(xi_value):
    """mathcal F_chi(xi) in equation (18)."""
    # Fresh dictionaries are intentional: mpmath.hyper2d mutates them.
    return mp.hyper2d(
        {"m+n": [mp.mpf("0.5")], "m": [mu], "n": [mp.mpf("0.5") - mu]},
        {"m+n": [mp.mpf("1.5")], "m": [mp.mpf("0.5")], "n": [mp.mpf("0.5")]},
        zeta(xi_value), -zeta(xi_value),
    )


def G_chi(xi_value):
    """mathcal G_chi(xi) in equation (19)."""
    # Fresh dictionaries are intentional: mpmath.hyper2d mutates them.
    return mp.hyper2d(
        {"m+n": [mp.mpf(1)], "m": [mu], "n": [1 - mu]},
        {"m+n": [mp.mpf(2)], "m": [mp.mpf("1.5")], "n": [mp.mpf("0.5")]},
        zeta(xi_value), -zeta(xi_value),
    )


def J_chi(xi_value):
    """J_chi in equation (21), with J_chi'=-d as in equation (16)."""
    Ux = U(xi_value)
    Vx = V(xi_value)
    J_perp = (
        2*xi_value*F_chi(xi_value)
        - xi_value*mp.exp(-zeta(xi_value))*Ux*Vx
    )
    J3 = mp.re(ii*chi*xi_value**2*G_chi(xi_value)/2)
    return mp.matrix([mp.re(J_perp), mp.im(J_perp), J3])


def cartesian_bracket(xi_value):
    """The bracketed expressions in equations (22)--(23)."""
    Ux = U(xi_value)
    Vx = V(xi_value)

    x_plus_iy = (
        2*c*xi_value*F_chi(xi_value)
        + mp.exp(-zeta(xi_value)) * (
            a*chi*Ux**2
            + a*chi**3*xi_value**2*Vx**2/4
            - (c*xi_value + ii*a*chi*xi_value**2)*Ux*Vx
        )
    )
    z = (
        mp.re(ii*c*chi*xi_value**2*G_chi(xi_value)/2)
        + a*xi_value*(
            2*abs(Ux)**2 - 1 - chi**2*mp.im(mp.conj(Ux)*Vx)
        )
    )
    return mp.matrix([mp.re(x_plus_iy), mp.im(x_plus_iy), z])


# Endpoint subtraction in Theorem 1 gives r(0)=0.
cartesian_bracket_xi0 = cartesian_bracket(xi0)


def r(s):
    """The Cartesian parametrization r(s) in (22)--(23)."""
    return rho**(-mp.mpf("1.5")) * (
        cartesian_bracket(xi(s)) - cartesian_bracket_xi0
    )


# Values at xi_0 entering the basepoint formula (42).
frame_xi0 = sabban_frame(xi0)
J_chi_xi0 = J_chi(xi0)


def u_C_b(sign):
    """u_+/- , C_+/- , and b_+/- from (30), (40)--(42)."""
    u_perp = sign * (
        chi*mp.pi*mp.exp(-mp.pi*chi**2/8)*mp.exp(-mp.pi*ii/4)
        / (
            mp.sqrt(2)
            * mp.gamma(mp.mpf("0.5") + ii*chi**2/8)
            * mp.gamma(1 - ii*chi**2/8)
        )
    )
    u_pm = mp.matrix([
        mp.re(u_perp),
        mp.im(u_perp),
        mp.exp(-mp.pi*chi**2/4),
    ])

    C_perp = u_perp * (
        ii/chi
        - chi/4 * (
            2*mp.log(2)
            + mp.digamma(1 - ii*chi**2/8)
            + mp.digamma(mp.mpf("0.5") + ii*chi**2/8)
        )
    )
    C3 = chi/4 * (
        (1 - mp.exp(-mp.pi*chi**2/4))
        * (mp.log(2) + mp.re(mp.digamma(1 - ii*chi**2/8)))
        - (1 + mp.exp(-mp.pi*chi**2/4))
        * (mp.log(2) + mp.re(mp.digamma(mp.mpf("0.5") + ii*chi**2/8)))
    )
    C_pm = mp.matrix([mp.re(C_perp), mp.im(C_perp), C3])

    b_pm = rho**(-mp.mpf("1.5")) * (
        c*C_pm
        - c*J_chi_xi0
        - a*(xi0*frame_xi0["gamma"] - chi*frame_xi0["d"])
    )
    return u_pm, C_pm, b_pm


def xyz(v):
    """Convert an mpmath vector to a NumPy vector for plotting."""
    return np.array([float(v[0]), float(v[1]), float(v[2])])


def r_float(s):
    """One direct arbitrary-precision evaluation of r(s), converted to float."""
    return xyz(r(mp.mpf(str(s))))


def cuboid_edges(lower, upper):
    """The twelve equal-weight edges of the plotting cuboid."""
    x0, y0, z0 = lower
    x1, y1, z1 = upper
    vertices = np.array([
        [x0, y0, z0], [x1, y0, z0], [x1, y1, z0], [x0, y1, z0],
        [x0, y0, z1], [x1, y0, z1], [x1, y1, z1], [x0, y1, z1],
    ])
    pairs = [
        (0, 1), (1, 2), (2, 3), (3, 0),
        (4, 5), (5, 6), (6, 7), (7, 4),
        (0, 4), (1, 5), (2, 6), (3, 7),
    ]
    return [(vertices[i], vertices[j]) for i, j in pairs]


def main():
    # Each plotted point is a direct evaluation of equations (22)--(23).
    with multiprocessing.Pool(processes=min(5, multiprocessing.cpu_count())) as pool:
        curve = np.vstack(pool.map(r_float, s_values, chunksize=4))

    u_plus, _C_plus, b_plus = map(xyz, u_C_b(+1))
    u_minus, _C_minus, b_minus = map(xyz, u_C_b(-1))

    line_parameter = np.linspace(-1.6, 1.6, 240)
    line_plus = b_plus + line_parameter[:, None]*u_plus
    line_minus = b_minus + line_parameter[:, None]*u_minus
    asymptotes = np.vstack([line_minus, np.full((1, 3), np.nan), line_plus])

    geometry = np.vstack([curve, line_plus, line_minus])
    lower = geometry.min(axis=0)
    upper = geometry.max(axis=0)
    span = upper - lower
    lower -= np.array([0.10, 0.10, 0.07])*span
    upper += np.array([0.10, 0.10, 0.07])*span

    fig = plt.figure(figsize=(8, 6))
    ax = fig.add_subplot(111, projection="3d")
    ax.plot(curve[:, 0], curve[:, 1], curve[:, 2], linewidth=1.8)
    ax.plot(
        asymptotes[:, 0], asymptotes[:, 1], asymptotes[:, 2],
        "--", linewidth=1.1,
    )

    ax.set_axis_off()
    for p, q in cuboid_edges(lower, upper):
        ax.plot(
            [p[0], q[0]], [p[1], q[1]], [p[2], q[2]],
            color=plt.rcParams["axes.edgecolor"], linewidth=0.65,
        )

    ax.set_xlim(lower[0], upper[0])
    ax.set_ylim(lower[1], upper[1])
    ax.set_zlim(lower[2], upper[2])
    ax.set_box_aspect((upper - lower)*np.array([1.12, 1.0, 0.90]), zoom=0.91)
    ax.view_init(elev=17, azim=-58)
    ax.set_proj_type("persp", focal_length=0.95)
    fig.tight_layout(pad=0)

    out = Path(__file__).with_name("figure1")
    fig.savefig(out.with_suffix(".pdf"), bbox_inches="tight", pad_inches=0.02)
    fig.savefig(out.with_suffix(".png"), dpi=300, bbox_inches="tight", pad_inches=0.02)
    np.savetxt(
        out.with_suffix(".csv"),
        np.column_stack([s_values, curve]),
        delimiter=",",
        header="s,x,y,z",
        comments="",
    )


if __name__ == "__main__":
    main()
\end{lstlisting}

\end{document}